\newtheorem{theorem}{Theorem}[section]
\newtheorem{lemma}[theorem]{Lemma}
\newtheorem{proposition}[theorem]{Proposition}
\renewcommand{\leq}{\leqslant}
\renewcommand{\geq}{\geqslant}
\theoremstyle{definition}
\theoremstyle{definition}
\numberwithin{equation}{section}
\newcommand{\ve}{\varepsilon}
\newcommand{\ti}[1]{\tilde{#1}}
\newcommand{\CC} {\mathbb{C}}
\renewcommand{\leq}{\leqslant}
\renewcommand{\geq}{\geqslant}
\renewcommand{\le}{\leqslant}
\renewcommand{\ge}{\geqslant}
\numberwithin{equation}{section} \numberwithin{figure}{section}
\title{Gromov-Hausdorff collapsing of Calabi-Yau manifolds}
\author[M. Gross]{Mark Gross$^{*}$}
\thanks{$^{*}$Supported in part by NSF grant DMS-1105871.}
\address{Mathematics Department, University of California San Diego, 9500 Gilman Drive \#0112, La Jolla, CA 92093}
\email{mgross@math.ucsd.edu}
\author[V. Tosatti]{Valentino Tosatti$^{\dagger}$}
\thanks{$^{\dagger}$Supported in part by a Sloan Research Fellowship and NSF grant DMS-1236969.}
 \address{Department of Mathematics, Northwestern University, 2033 Sheridan Road, Evanston, IL 60201}
  \email{tosatti@math.northwestern.edu}
  \author[Y. Zhang]{Yuguang Zhang$^{\ddagger}$}
  \thanks{$^{\ddagger}$Supported in part  by NSFC-11271015.}
\address{Mathematics Department, Capital Normal University, Beijing 100048, P.R.China.}
\email{yuguangzhang76@yahoo.com}
\begin{document}
\begin{abstract}
This paper is a sequel to \cite{GTZ}.  We further study Gromov-Hausdorff collapsing limits  of Ricci-flat K\"ahler metrics on abelian fibered    Calabi-Yau manifolds. Firstly,  we show that in the same setup as \cite{GTZ},  if the dimension of the  base manifold is  one, the limit  metric space     is homeomorphic to the base manifold.  Secondly,  if the fibered Calabi-Yau manifolds are Lagrangian fibrations of holomorphic  symplectic manifolds, the metrics on the regular parts of the limits  are special K\"ahler metrics.  By combining these two results, we extend \cite{GW} to any fibered projective  K3 surface without any assumption on  the type of singular fibers.
\end{abstract}
\maketitle
\section{Introduction}

In this paper we continue our study in \cite{GTZ} of the structure of collapsed Gromov-Hausdorff limits of Ricci-flat K\"ahler metrics on compact Calabi-Yau manifolds.
Let $M$ be a projective Calabi-Yau manifold of complex dimension $m$, with $\Omega$ a nowhere vanishing holomorphic $m$-form on $M$. Let $N$ be a projective manifold of dimension $0<n<m$, and $f:M \to N$ be a holomorphic fibration (i.e., a surjective holomorphic map with connected fibers) whose general fibre
is an abelian variety.  Let $\alpha$ be an ample class on $M$, and  let $N_{0}\subset N$ be the Zariski  open subset such that, for any $y\in N_{0}$, $M_{y}=f^{-1}(y)$ is  smooth (and therefore Calabi-Yau). Let $D=N\backslash N_{0}$ be the discriminant locus of the map $f$.
Let $\alpha_{0} $  be an  ample class on $N$,  and $\tilde{\omega}_{t}\in f^{*}\alpha_{0} +t\alpha$ be the Ricci-flat K\"ahler metric given by Yau's Theorem \cite{Ya} for $t\in  ( 0, 1 ]$,  which satisfies the complex Monge-Amp\`ere equation
\begin{equation}\label{eq1}  \tilde{\omega}_{t}^{m}=c_{t} t^{m-n}(-1)^{\frac{m^{2}}{2}}\Omega  \wedge\overline{\Omega}.
\end{equation}
By \cite{To1}   and \cite{GTZ},   $\tilde{\omega}_{t}$  converges smoothly  to $f^{*}\omega$ on $f^{-1}(K)$ for any compact $K\subset N_{0}$ as $t\rightarrow 0$, and
on $N_{0}$,  where $\omega$ is the    K\"ahler metric  on $N_{0}$ with $${\rm Ric}(\omega) =\omega_{WP}$$
 obtained in \cite{To1} and \cite{ST2} (see also \cite{ST1}), and
 $\omega_{WP}$ is a Weil-Petersson semipositive form on $N_0$ coming from the variation of the complex structures of the fibers $M_y$.  Furthermore, the Ricci-flat metrics $\ti{\omega}_t$ have locally uniformly bounded curvature on $f^{-1}(N_0)$. Thanks to \cite{To0} and \cite{Z},  the diameter of the metrics $\ti{\omega}_t$ satisfies
$${\rm diam}_{\tilde{\omega}_{t}}(M) \leqslant C,$$
for a constant $C>0$ independent of $t$. Gromov's pre-compactness theorem (cf.\ \cite{Gr1}) then implies that for any sequence $t_{k} \to 0$,  a subsequence of $(M, \tilde{\omega}_{t_{k}})$ converges to a compact metric space $(X, d_{X})$ in the Gromov-Hausdorff sense (a priori different subsequences could result in non-isometric limits).  In our earlier work \cite{GTZ} we proved that $(N_{0}, \omega)$ can be locally isometrically  embedded into $(X, d_{X})$, with open dense image $X_0\subset X$, via a homeomorphism $\phi:N_0\to X_0$.
The following questions remain open (see \cite{To2, To3}).
\begin{enumerate}
\item Is $(X,d_X)$ isometric to the metric completion of $(N_{0}, \omega)$?
\item Is the real Hausdorff codimension of $X\backslash X_0$ at least $2$?
\item Is $X$ homeomorphic to $N$?
\end{enumerate}
Note that (1) implies that there is no need to pass to any subsequence to obtain Gromov-Hausdorff convergence.

The first result of this paper is that all these questions have an affirmative answer if $N$ has complex dimension $1$.

 \begin{theorem}\label{theorem1}   If  $n=1$,  then the Gromov-Hausdorff limit
$$(M, \tilde{\omega}_{t}) \stackrel{d_{GH}}\longrightarrow  (X, d_X),$$
exists as $t \to 0$, and is isometric to the metric completion of $(N_0,\omega)$. Furthermore, $X\backslash X_0=X\backslash\phi(N_0)$ is a finite number of points,
and $X$ is homeomorphic to $N$.
\end{theorem}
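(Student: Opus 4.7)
The plan is to identify $(X,d_X)$ with the metric completion of $(N_0,\omega)$ and to show that this completion is homeomorphic to $N$. Since $\dim_{\CC}N=1$, the discriminant $D$ is a finite set $\{p_1,\dots,p_k\}$ and $N_0$ is a compact Riemann surface with finitely many punctures. Write $d^{\rm int}_\omega$ for the intrinsic length distance on $N_0$ induced by $\omega$. The central technical input is a local diameter bound near each $p_i$: for every $\ve>0$ there is a punctured disc neighbourhood $U\subset N_0$ of $p_i$ in $N$ such that $\operatorname{diam}_{d^{\rm int}_\omega}U<\ve$. To obtain it, choose a holomorphic coordinate $z$ centred at $p_i$ and write $\omega=e^{u(z)}\mn\,dz\wedge d\ov z$. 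The equation $\Ric(\omega)=\omega_{WP}$ reduces to a linear Poisson equation $-u_{z\ov z}=h(z)$ on the punctured disc, where $h$ is the local density of $\omega_{WP}$. By the asymptotic theory of variations of Hodge structure (e.g.\ Schmid's nilpotent orbit theorem) applied to the abelian fibration $f$, $h$ has explicit polynomial/logarithmic asymptotics as $z\to 0$. Solving the Poisson equation yields the leading behaviour of $u$, from which one estimates the $\omega$-length of radial segments terminating at $z=0$ and the $\omega$-circumference $2\pi|z|e^{u(z)/2}$ of the circles $\{|z|=\rho\}$ as $\rho\to 0$, giving the bound.

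By \cite{GTZ}, $\phi:(N_0,\omega)\to(X_0,d_X)$ is a local isometry, hence preserves path-lengths, so $d_X(\phi(z),\phi(w))\le d^{\rm int}_\omega(z,w)$ for $z,w\in N_0$. The local diameter bound then shows that any sequence $z_n\in N_0$ with $z_n\to p_i$ in $N$ is Cauchy in $d^{\rm int}_\omega$, hence $\phi(z_n)$ converges in $X$ to a limit $q_i$ independent of the sequence, yielding a continuous extension $\ov\phi:N\to X$ with $\ov\phi(p_i)=q_i$. For surjectivity of $\ov\phi$, take $x\in X$ and approximating points $\ti x_n\in M$ with $\ti x_n\to x$ along a subsequence $t_n\to 0$. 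After extraction $f(\ti x_n)\to p\in N$; if $p\in N_0$, the smooth convergence $\ti\omega_t\to f^*\omega$ on $f^{-1}(K)$ for $K\subset N_0$ compact gives $x=\phi(p)$. If $p=p_i$, choose $z_n\in N_0$ with $z_n\to p_i$ and $\ti z_n\in f^{-1}(z_n)$ and show $d_{\ti\omega_{t_n}}(\ti x_n,\ti z_n)\to 0$, so that $x=q_i$. This uses the fibrewise collapsing of $\ti\omega_t$ over $N_0$ together with an integral/Bishop--Gromov argument exploiting $\Ric(\ti\omega_t)=0$ and the decay $\Vol_{\ti\omega_t}(f^{-1}(U))\to 0$ as the disc $U$ shrinks to $\{p_i\}$, to control distances within $f^{-1}(U)$ across the singular fibre.

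The points $q_1,\dots,q_k$ are pairwise distinct and distinct from $X_0$: choose disjoint neighbourhoods of the $p_i$ in $N$ and use the local isometry of $\phi$ to obtain a positive lower bound for $d_X$ between points mapped from different neighbourhoods. Combined with the previous paragraph, $\ov\phi:N\to X$ is a continuous bijection of compact Hausdorff spaces, hence a homeomorphism; it in fact identifies $(X,d_X)$ isometrically with the metric completion of $(N_0,d^{\rm int}_\omega)$. Since this identification depends only on $(N_0,\omega)$ and not on the subsequence $t_n\to 0$, no subsequence is needed for the Gromov--Hausdorff convergence.

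The hardest part will be the local estimate in the first paragraph: deriving sharp asymptotics of $\omega$ near each $p_i$ for arbitrary, possibly non-semistable singular fibres of the abelian fibration, rather than the $I_k$ case handled by the explicit Ooguri--Vafa models of \cite{GW}. A secondary obstacle is the integral estimate in the second paragraph needed to control distances within a small punctured neighbourhood of the singular fibre $f^{-1}(p_i)$.
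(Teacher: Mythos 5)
Your overall architecture (local diameter bound near each $p_i$ via Hodge-theoretic asymptotics, then extension of $\phi$ to a map $\ov\phi:N\to X$) matches the paper's, but there are two genuine gaps. First, the route to the key local estimate does not work as stated: on a punctured disc the Poisson equation $-u_{z\ov z}=h$ determines $u$ only up to a harmonic function, which can contain terms like $a\log|z|$ (or worse) for arbitrary $a$, so ``solving the Poisson equation'' does \emph{not} yield the leading behaviour of $u$, and the equation $\Ric(\omega)=\omega_{WP}$ alone cannot give the diameter bound. What the paper uses instead is the much stronger pointwise identity (the $n=1$ case of the limiting Monge--Amp\`ere equation from \cite{To1}, equation \eqref{eq3}): $\omega$ \emph{equals} a constant times $f_*\bigl((-1)^{m^2/2}\Omega\wedge\ov\Omega\bigr)$, i.e.\ the conformal factor is exactly the fibre integral $\varphi_U(y)=\pm\int_{M_y}\Omega_y\wedge\ov\Omega_y$. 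It is to this quantity that the variation-of-Hodge-structure machinery (quasi-unipotence of monodromy, semistable reduction, canonical extension of the Hodge bundle) is applied, yielding $\varphi_U(y)\le C|y|^{\alpha}(1-\log|y|)^d$ with $\alpha>-2$, which is precisely what makes the radial lengths and circumferences go to zero. Your instinct to invoke Schmid-type asymptotics is correct, but it must be applied to $\varphi_U$ through \eqref{eq3}, not fed into a PDE whose solution is underdetermined.

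Second, you assert that $\ov\phi$ ``in fact identifies $(X,d_X)$ isometrically with the metric completion of $(N_0,d_\omega^{\rm int})$'' and that the $q_i$ are distinct ``by the local isometry of $\phi$.'' The local isometry only gives the inequality $d_X(\phi(x),\phi(y))\le d_\omega(x,y)$; the reverse inequality is the hard point and is not automatic, because a minimizing path in the length space $X$ could pass through the finite set $S_X$ and create a shortcut invisible in $N_0$ (the wedge of two discs at a point is the standard counterexample, and it is exactly the configuration one must exclude to show $q_i\ne q_j$ --- so your lower-bound argument for distinctness is circular). The paper closes this by Cheeger--Colding theory: it identifies the renormalized limit measure $\nu$ with $\lambda\,\mathcal{H}^2_{d_X}$, checks $\nu_{-1}(S_X)=0$ since $S_X$ is finite, and applies \cite[Theorem 3.7]{CC2} to produce, for almost every pair of points of $X_0$, a minimal geodesic avoiding $S_X$; this yields $d_\omega\le d_X$, hence the isometry, and only then the distinctness of the $x_k$. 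Some such measure-theoretic or geodesic-avoidance argument is indispensable and is entirely missing from your proposal. (Your Bishop--Gromov argument for surjectivity is more elaborate than needed: the image of $\ov\phi$ is compact and contains the dense set $X_0$, so surjectivity is immediate once $S_X$ is identified with the limit points $x_k$.)
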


In the case of  non-collapsing limits,  analogous results about metric completions   have been  obtained in \cite{RZ, RZ2, RuZ}, and homeomorphism results have been obtained in \cite{DS} (see also \cite{Tian}).

Now we drop the assumption that the dimension of $N$ equals $1$, and assume instead that $M$ is irreducible (i.e., simply connected and not the product of two lower-dimensional complex manifolds) and that it admits a holomorphic symplectic form $\Theta$, which is a non-degenerate holomorphic 2-form. In this case the complex dimension of $M$ must be even, $m=2n$, and $(M, \Theta)$ is called an irreducible holomorphic symplectic manifold. If $f:M \to N$ is a fibration as before, then it is known \cite{Ma} that all smooth fibers $M_y$ are complex $n$-tori, which are holomorphic Lagrangian, in the sense that  $\Theta|_{M_{y}}=0$. Also, the base manifold $N$ is always biholomorphic to $\mathbb{CP}^n$ \cite{Hw, GL}.
Furthermore, we have the nowhere vanishing holomorphic $2n$-form $\Omega=\Theta^{n}$, and the Ricci-flat equation \eqref{eq1} says exactly that the metrics $\ti{\omega}_t$ for any $0<t\leq 1$ are all hyperk\"ahler (i.e., their Riemannian holonomy equals $Sp(n)$).

In this case, the results of our previous work \cite{GTZ} apply, and we have again that the Ricci-flat metrics $\ti{\omega}_t$ collapse smoothly with locally bounded curvature
to a K\"ahler metric $\omega$ on $N_0$, with the same properties as before. To state our next result, we need a few definitions. A K\"ahler metric $\omega$ on a complex manifold $(N,J)$ is called a {\em special K\"ahler metric} \cite{Fr} if there is a real flat torsion-free connection $\nabla$ on $M$ with $\nabla\omega=0$ and such that
$$d^{\nabla}J=0,$$
where $d^{\nabla}:\Omega^p(TN)\to \Omega^{p+1}(TN)$ is the extended deRham complex ($(d^{\nabla})^2=0$ since $\nabla$ is flat), and we are viewing $J$ as an element of $\Omega^1(TN)$. This notion originated in the physics literature, and has been extensively studied, see e.g. \cite{Co, Fr, Hi2, LYZ, Lu, St}. In particular, a special K\"ahler manifold carries an affine structure (given by local flat Darboux coordinates), with respect to which the metric is a {\em Hessian metric} \cite{CY} (i.e., its Riemannian metric in local Darboux coordinates is given by the Hessian of a real convex function), see \cite[Proposition 1.24]{Fr}. Recall \cite{CY, Hi1} that a Hessian metric is called a {\em Monge-Amp\`ere metric} if its determinant is a constant, and that an affine structure is called integral if its transition functions have integral linear part \cite{Gro}.

With these definitions in place, we can state our next result, which complements \cite[Theorem 1.3]{GTZ}:
\begin{theorem}\label{theorem2}   If $M$ is an irreducible holomorphic symplectic manifold, and $f:M \to N=\mathbb{CP}^n$ is a holomorphic Lagrangian fibration, then the limiting metric $\omega$ is a special K\"ahler metric on $N_{0}$. Its associated affine structure is integral, and its Riemannian metric is a Monge-Amp\`ere metric on $N_0$.
\end{theorem}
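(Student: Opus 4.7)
My plan is to construct a canonical special K\"ahler structure on $N_0$ directly from the Lagrangian fibration $f$, and then to identify its K\"ahler form with the limiting metric $\omega$ via a fiberwise analysis of equation \eqref{eq1}. For the first step, because each smooth fiber $M_y$ is a holomorphic Lagrangian complex torus, contraction with $\Theta$ induces a canonical $\mathbb{C}$-linear isomorphism $T^*_y N \simeq T_0 M_y$, under which the period lattice $H_1(M_y,\mathbb{Z})\subset T_0 M_y$ becomes a full-rank real lattice $\Lambda_y\subset T^*_y N$. As $y$ varies on $N_0$, this produces a local system $\Lambda\subset T^*N_0$ flat for the Gauss-Manin connection; a local basis of sections of $\Lambda$ consists of closed real $1$-forms with integer-valued transition functions, whose primitives are flat Darboux coordinates, yielding an integral affine structure on $N_0$. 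Now fix a local primitive $\eta$ of $\Theta$ on a tubular neighborhood of a smooth fiber $M_{y_0}$ and a local Lagrangian half-basis $\gamma_1,\dots,\gamma_n$ of $\Lambda$: the period integrals $z_i(y) = \int_{\gamma_i(y)}\eta$ are then holomorphic coordinates on $N_0$, and the Riemann bilinear relations for the closed holomorphic $2$-form $\Theta$ produce a local holomorphic prepotential $\mathcal{F}(z)$ expressing the dual periods as $\partial \mathcal{F}/\partial z_i$. Following \cite[Prop.~1.24]{Fr}, this data amounts to a special K\"ahler structure on $N_0$ with an intrinsic K\"ahler form $\omega_{\mathcal{F}}$ whose K\"ahler potential is Hessian in the flat Darboux coordinates.

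\emph{Identification of $\omega$ with $\omega_{\mathcal{F}}$.} By \cite{GTZ}, $\tilde\omega_t$ converges smoothly to $f^*\omega$ on $f^{-1}(K)$ for any compact $K\subset N_0$, and the restriction of $\tilde\omega_t/t$ to each fiber $M_y$ limits to a flat K\"ahler metric. Writing $\tilde\omega_t = f^*\omega + t\,\omega_{\mathrm{fib}} + O(t^2)$ locally in a semi-flat trivialization of $f^{-1}(U)$, expanding $\tilde\omega_t^{2n}$ and equating the leading-order term in $t$ with the right-hand side of \eqref{eq1} (using $\Omega = \Theta^n$) yields, in the limit, a real Monge-Amp\`ere equation
\[
\det\Bigl(\frac{\partial^2 \phi}{\partial x_i\,\partial x_j}\Bigr) = c
\]
for the local K\"ahler potential $\phi$ of $\omega$ in the flat coordinates $x_i$. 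The constancy of the right-hand side reflects the fact that the canonical flat metric on $T^*_y N/\Lambda_y$ has covolume independent of $y$ in the flat Darboux coordinates. Comparing this Hessian potential $\phi$ with the K\"ahler potential assembled from $\mathcal{F}$ forces $\omega=\omega_{\mathcal{F}}$, so $\omega$ is the special K\"ahler metric constructed above and simultaneously a Monge-Amp\`ere metric; the integrality of the affine structure is built into the construction of $\Lambda$ from $H_1(M_y,\mathbb{Z})$.

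\emph{Main difficulty.} The principal technical step is the fiberwise analysis of \eqref{eq1}: establishing that $\tilde\omega_t|_{M_y}/t$ converges to the translation-invariant flat metric on $T^*_y N/\Lambda_y$ whose covolume is locally constant in the flat Darboux coordinates, and extracting the resulting real Monge-Amp\`ere equation on $N_0$. This requires combining the smooth convergence and locally bounded curvature results from \cite{GTZ} with an explicit comparison to the semi-flat model associated to the Lagrangian fibration. A secondary subtlety is verifying the Riemann bilinear relations for $\Theta$ in the form required to produce the holomorphic prepotential $\mathcal{F}$; this should follow from the closedness and non-degeneracy of $\Theta$ together with its vanishing on Lagrangian fibers. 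Once these ingredients are in place, the three assertions of the theorem---special K\"ahler, integral affine, and Monge-Amp\`ere---are all consequences of the construction.
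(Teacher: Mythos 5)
Your construction of the lattice $\Lambda\subset T^*N_0$ and of the candidate special K\"ahler structure on the base is essentially the route the paper takes (via Freed's description of algebraic integrable systems), and the integrality and Monge--Amp\`ere statements would indeed follow once $\omega$ is identified with that canonical metric. The gap is in the identification itself. Expanding $\tilde\omega_t^{2n}$ in \eqref{eq1} and keeping the leading order in $t$ only yields $(f^*\omega)^n\wedge\omega_{SF}^n=c\,\Theta^n\wedge\overline{\Theta}^n$, i.e.\ a constraint on the \emph{determinant} of $\omega$ in the flat coordinates --- and this is nothing more than the defining Monge--Amp\`ere equation for $\omega$ (the analogue of \eqref{eq3}), rewritten. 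A real Monge--Amp\`ere equation $\det(\partial^2\phi/\partial x_i\partial x_j)=c$ is very far from determining the metric locally, so ``comparing this Hessian potential with the potential assembled from $\mathcal{F}$'' does not force $\omega=\omega_{\mathcal{F}}$. Worse, your argument presupposes that the Riemannian metric of $\omega$ is the Hessian of a function in the flat Darboux coordinates; that is precisely the special K\"ahler property you are trying to prove (it is the content of \cite[Proposition 1.24]{Fr} \emph{once} one knows $\omega$ is special K\"ahler), so the step is circular.

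The missing idea is to use the full hyperk\"ahler structure of the rescaled limit, not just its volume form. By \cite[Lemma 4.7]{GTZ} the rescaled metrics $\lambda_t^*p^*T_\sigma^*\tilde\omega_t$ converge smoothly to $\omega_{SF}+f^*\omega$, while the rescaled forms $\sqrt{t}\,\Theta$ converge to $\Theta$; hence $(\omega_{SF}+f^*\omega,\Theta)$ is a hyperk\"ahler structure on $T^*B$. Writing $\omega=\sqrt{-1}\sum A_{ij}dy_i\wedge d\overline{y}_j$ and $\omega_{SF}|_{M_y}=\sqrt{-1}\sum(\mathrm{Im}\,Z)^{-1}_{ij}dz_i\wedge d\overline{z}_j$, the complex structure $J$ defined by $\mathrm{Re}\,\Theta(\cdot,\cdot)=g(\cdot,J\cdot)$ must satisfy $J^2=-\mathrm{id}$, and this algebraic identity forces $A=\mathrm{Im}\,Z$, i.e.\ $\omega=\sum d_i^{-1}\delta_i\wedge\xi_i$. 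This pointwise identification is what allows one to invoke \cite[Theorem 3.4(a)]{Fr} to conclude that $\omega$ is special K\"ahler, after which the Hessian, Monge--Amp\`ere and integrality claims follow as you indicate. Without some such use of the compatibility of the limiting metric with $\Theta$ beyond the top-degree volume identity, your proof does not close.
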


These last two facts are intimately related to the Strominger-Yau-Zaslow picture of mirror symmetry \cite{SYZ}, and to a conjecture of Gross-Wilson \cite[Conjecture 6.2]{GW}, \cite[Conjecture 7.32]{DB}, Kontsevich-Soibelman \cite[Conjecture 1]{KS} and Todorov \cite[p. 66]{Man} which predicts that collapsed Gromov-Hausdorff limits of unit-diameter Ricci-flat K\"ahler metrics on Calabi-Yau manifolds which approach a large complex stucture limit should be half-dimensional Riemannian manifolds on a dense open set. Furthermore, this open set should carry an integral affine structure which makes the metric Monge-Amp\`ere. And finally, the complement of this open set should have real Hausdorff codimension at least $2$.

In our earlier work \cite[Theorem 1.3]{GTZ} we proved the first part of this conjecture for families of hyperk\"ahler manifolds satisfying some hypotheses. Theorem \ref{theorem2} applied to that same setup proves the second part of this conjecture. The third part follows from Theorem \ref{theorem1} in the case when $n=1$, i.e., $M$ is a K3 surface. This was proved by Gross-Wilson \cite{GW} for elliptically fibered  K3 surfaces over $\mathbb{CP}^1$ with  only  singular  fibers of type $\rm I_{1}$, while our results here apply to all elliptically fibered K3 surfaces over $\mathbb{CP}^1$.\\

To prove Theorem \ref{theorem1} we first use Hodge theory to derive precise asymptotics for the fiberwise integrals of the holomorphic volume form on $M$ over the fibers $M_y$ as $y$ approaches a critical value for $f$. This is the content of section \ref{sectvol}. We then use these asymptotics together with estimates from our previous work \cite{GTZ} to complete the proof of Theorem \ref{theorem1} in section \ref{sectpf1}. The proof of Theorem \ref{theorem2} occupies section \ref{sectpf2}.

 \vskip 7mm

\noindent {\bf Acknowledgements:}  The second named author would like to thank Aaron Naber for useful discussions. The last named  author would like to thank Professor Zhiqin Lu, Hao Fang and Jian Song for some discussions. This work was carried out while the last named author was visiting the Mathematics Department of the University of California at San Diego.

\section{Volume asymptotics}\label{sectvol}

In this section we derive asymptotics for the pushforward of the holomorphic volume form on a Calabi-Yau manifold which is the total space of a holomorphic fibration over a curve.

We fix $M$ an $m$-dimensional non-singular
projective Calabi-Yau manifold with holomorphic $m$-form $\Omega$,
$N$ a non-singular projective algebraic curve,
and assume that we have a surjective map $f:M\rightarrow N$.
Let $D\subseteq N$ be the discriminant locus of the map $f$.

By Hironaka's resolution of singularities, there is a birational
morphism $\pi:\tilde M\rightarrow M$ such that $\tilde M$ is
non-singular and $\tilde f:\tilde M\rightarrow N$ is a normal crossings
morphism, i.e., locally
one can find coordinates $(z_1,\ldots,z_m)$ on $\tilde M$
such that $\tilde f$
is given by $(z_1,\ldots,z_m)\mapsto \prod_{i=1}^{m}z_i^{d_i}$.
Note that the fibres may be non-reduced.

Let $U$ be an open neighbourhood in $N$ of a point $y_0\in D$
with $U$ biholomorphic to the unit disc $\Delta$.
Let $y$ be a holomorphic coordinate on $U$ giving this biholomorphism
taking the value $0$ at $y_0$. We write
$\Omega$ also for the pull-back of $\Omega$ on $M$ to $\tilde M$.
Define a real function $\varphi_U$ on $U$ by the identity
\[
 (-1)^{\frac{m^{2}}{2}} \tilde{f}_*\Omega\wedge\bar\Omega= \varphi_U \sqrt{-1}dy\wedge d\bar y.
\]
The main result of this section is the following:

\begin{proposition}\label{volest}
After possibly shrinking $U$, we
obtain the estimate
\begin{equation}
\label{estimate1}
|\varphi_U(y)|\le C|y|^{\alpha} (1-\log |y|)^d
\end{equation}
for some non-negative integer $d$ and rational number $\alpha>-2$ and for all $0\neq y\in U$.
\end{proposition}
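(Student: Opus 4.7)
The plan is to reduce Proposition~\ref{volest} to an explicit local computation on $\tilde M$ using the normal crossings structure, and then to carry out that computation in each chart. First I would cover a neighborhood of the compact fiber $\tilde f^{-1}(y_0)$ by finitely many coordinate charts $(V_k,(z_1,\ldots,z_m))$ in which $\tilde f=\prod_{i=1}^m z_i^{d_i}$, and pick a subordinate partition of unity $\{\chi_k\}$. After possibly applying Hironaka once more to put the vanishing locus of $\pi^*\Omega$ simultaneously into normal crossings with $\tilde f^{-1}(y_0)$, I would arrange that on each chart
\[
\pi^*\Omega \;=\; z_1^{a_1}\cdots z_m^{a_m}\,u(z)\,dz_1\wedge\cdots\wedge dz_m,
\]
with integers $a_i\geq 0$ and $u$ a nowhere vanishing holomorphic function. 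The nonnegativity of the $a_i$ is the key input that will force $\alpha>-2$ at the end: since $\Omega$ is a nowhere vanishing holomorphic top form on $M$ (as $M$ is Calabi-Yau) and $\pi$ is birational, $\pi^*\Omega$ can only vanish along the exceptional divisor of $\pi$.

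Writing $\varphi_U=\sum_k\varphi_{U,k}$ according to the partition of unity, it then suffices to bound each $\varphi_{U,k}$. For a fixed chart, I plan to pick an index $i_0$, use $z_{i_0}$ as the dependent variable on the fiber, and apply a Fubini/change-of-variables argument. Since $|\partial_{z_{i_0}}\tilde f|^2=d_{i_0}^{\,2}|y|^2/|z_{i_0}|^2$ on $\{\tilde f=y\}$, this yields
\[
\varphi_{U,k}(y) \;=\; c\int_{\{\tilde f=y\}\cap V_k}\chi_k\,\frac{|z_{i_0}|^{2(a_{i_0}+1)}\prod_{i\neq i_0}|z_i|^{2a_i}\,|u|^2}{d_{i_0}^{\,2}\,|y|^2}\prod_{i\neq i_0}\tfrac{\sqrt{-1}}{2}\,dz_i\wedge d\bar z_i.
\]
Using the fiber identity $|z_{i_0}|^2=|y|^{2/d_{i_0}}\prod_{i\neq i_0}|z_i|^{-2d_i/d_{i_0}}$ then pulls out the global prefactor $|y|^{2(a_{i_0}+1)/d_{i_0}-2}$ and leaves an integrand proportional to $\prod_{i\neq i_0}|z_i|^{2d_i[(a_i+1)/d_i-(a_{i_0}+1)/d_{i_0}]-2}\,|u|^2$.

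The optimization step is to choose $i_0$ as an index realizing $\min_i(a_i+1)/d_i$. For every $i\neq i_0$ the exponent on $|z_i|$ is then $\geq -2$, with equality precisely when $i$ is another minimizer. Passing to polar coordinates, strict inequality gives an $r_i$-integral convergent at $r_i=0$, while equality together with the chart constraint $|z_{i_0}|\leq 1$ (which forces $\prod_{i\neq i_0}|z_i|^{d_i}\geq |y|$) contributes one logarithmic factor of size $\lesssim 1-\log|y|$ per additional minimizer. Combining these gives
\[
|\varphi_{U,k}(y)| \;\leq\; C_k\,|y|^{2(a_{i_0}+1)/d_{i_0}-2}\,(1-\log|y|)^{d_k},
\]
and since $a_{i_0}\geq 0$ the rational exponent $\alpha_k=2(a_{i_0}+1)/d_{i_0}-2$ lies in $(-2,\infty)$. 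Summing over the finitely many charts with $\alpha=\min_k\alpha_k$ and $d=\max_k d_k$ would complete the proof.

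The main obstacle I anticipate is the preparatory Hironaka step placing $\tilde f^{-1}(y_0)$ and the vanishing divisor of $\pi^*\Omega$ in simultaneous normal crossings form. Once this is done, the remainder is a careful Fubini-type bookkeeping, the most delicate point being the accounting of logarithmic contributions from the (possibly multiple) minimizers of $(a_i+1)/d_i$ and checking that these estimates glue compatibly across the partition of unity.
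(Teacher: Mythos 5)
Your proposal is essentially correct, but it proves the estimate by a genuinely different route from the paper. The paper's argument is Hodge-theoretic: after a base change $y=w^\beta$ making the monodromy unipotent, it realizes $\Omega^{rel}$ as a meromorphic section of the Deligne canonical extension of the Hodge bundle $F^{m-1}_{\bar U}$ (identified, via stable reduction, Steenbrink's theorem and a toric computation, with $\bar f'_*\Omega^{m-1}_{\bar M'/\bar U}(\log \bar Y')$), computes the pole order $\beta(\ell-1)/\ell$ only at general points of the components of $\tilde Y$ (so $\alpha=-2(\ell-1)/\ell>-2$ with $\ell$ the largest multiplicity), and then gets the $(1-\log|y|)^d$ factor from expanding $\exp(-N\log w/2\pi\sqrt{-1})$ against bounded holomorphic coefficients and the topological cup-product pairing. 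Your approach instead bounds the Gelfand--Leray fiber integral directly in normal crossings charts: the computation of $\Omega^{rel}\wedge\bar\Omega^{rel}$ on the fiber, the extraction of the prefactor $|y|^{2(a_{i_0}+1)/d_{i_0}-2}$ by choosing $i_0$ to minimize $(a_i+1)/d_i$, and the logarithmic contributions from the other minimizers (bounded by the volume of the simplex $\sum_{i\in E} d_i s_i\le -\log|y|$ after substituting $s_i=-\log r_i$) are all sound, and the effectivity of $K_{\tilde M}-\pi^*K_M$ (i.e., $a_i\ge 0$, since $M$ is smooth and $\Omega$ nowhere vanishing) correctly forces $\alpha>-2$. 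This is the classical Varchenko/Barlet-style asymptotic analysis of fiber integrals. What each approach buys: yours is more elementary and self-contained modulo the (standard) simultaneous log resolution of $\tilde f^{-1}(y_0)$ and the exceptional locus, and it produces an explicit candidate for the sharp exponent $2\min_i(a_i+1)/d_i-2$ (a log-canonical-threshold-type quantity); the paper's avoids all multi-variable integration at the strata where components meet, trading it for standard but heavier machinery (monodromy theorem, canonical extensions, stable reduction), and ties the exponent and the logarithm power to monodromy data. The only points requiring care in your write-up are the Fubini bookkeeping when convergent and borderline factors are coupled by the constraint $\prod_i |z_i|^{d_i}=|y|$ (which works because the constraint only shrinks the domain), and the fact that the polar-coordinate projection of the fiber onto the variables $z_i$, $i\ne i_0$, is a $d_{i_0}$-to-one covering, contributing only a bounded multiplicative constant.
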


\begin{proof}
Let $n_0\in U\setminus\{y_0\}$ be a basepoint and let
\[
T:H^{m-1}(\tilde f^{-1}(n_0),\CC)\rightarrow H^{m-1}(\tilde
f^{-1}(n_0),\CC)
\]
be the monodromy operator for a loop based at $n_0$ around $y_0$.
By the Monodromy Theorem (see e.g., the appendix of \cite{La})
$T$ is quasi-unipotent with $(T^{\beta}-I)^{d}=0$ for some
positive integers $d$ and $\beta$, and $\beta$ is the least common multiple
of the multiplicities of the irreducible components of the
fibre over $y_0$. Let $\bar U=\Delta$ with coordinate $w$,
and let $\mu:\bar U\rightarrow U$
be given by $\mu(w)=w^\beta$.

Pull-back and normalize the family
$\tilde M\rightarrow N$
via the composition $\bar U
{\smash{
 \mathop{\longrightarrow}\limits^{\mu}}}
 U\hookrightarrow \tilde N$,
to obtain a family $\bar f:\bar M\rightarrow \bar U$.
This has discriminant locus $\bar D=\mu^{-1}(y_0)=\{0\}$
and $\bar f$ now has the property that the monodromy around a
loop in $\bar U^o:=\bar U\setminus \mu^{-1}(y_0)$
is unipotent.

Now the trivial vector bundle
$\mathcal{ H}^{m-1}=(R^{m-1}\bar f_*\CC)\otimes_{\CC}\mathcal{O}_{\bar U^o}$ on
$\bar U^o$ comes with
the Gauss-Manin connection, whose flat sections are sections of
$R^{m-1}\bar f_*\CC$. It is standard that this vector bundle has
a canonical extension to $\bar U$, (see e.g., \cite{Gr}, Chapter IV)
constructed as follows.
Choosing a basepoint $t_0\in \bar U^o$, let $e_1,\ldots,e_s$ be
a basis for $H^{m-1}(\bar f^{-1}(t_0))$. These extend to multi-valued flat
sections of $\mathcal{H}^{m-1}$, which we write as $e_i(w)$. However,
\[
\sigma_i(w):=\exp\left(-N\frac{\log w}{2\pi\sqrt{-1}}
\right)e_i(w)
\]
with $N=\log T$ is in fact a single-valued holomorphic section of
$\mathcal{H}^{m-1}$. We then extend $\mathcal{H}^{m-1}$ across $\bar U$ by decreeing
these sections to form a holomorphic frame for the vector bundle. Call
this extension $\mathcal{H}^{m-1}_{\bar U}$

It is then standard (see again \cite{Gr}, Chapter IV)
that the Hodge bundle
$F^{m-1}_{\bar U^o}:=(f_*\Omega^{m-1}_{\bar M/\bar U})|_{\bar U^o}\subseteq
\mathcal{H}^{m-1}$
has a natural extension $F^{m-1}_{\bar U}\subseteq \mathcal{H}^{m-1}_{\bar U}$ to
$\bar U$.

Next note that the form
\[
\Omega^{rel}:=\iota(\partial/\partial y)\Omega
\]
is a well-defined section of $\tilde f_*\Omega^{m-1}_{\tilde f^{-1}(U)/U}$
and thus pulls back to
a well-defined section $\Omega^{rel}_{\bar U^o}$ of
$F^{m-1}_{\bar U^o}$. Furthermore,
the function $\varphi_U$ given in the statement
of the theorem satisfies at a point $y\in U$
\[
\varphi_U(y)=(-1)^{\frac{(m-1)^{2}}{2}}\int_{\tilde f^{-1}(y)} \Omega^{rel}\wedge \bar\Omega^{rel}.
\]

We will show that the section $\Omega^{rel}_{\bar U^o}$ of $F^{m-1}_{\bar U^o}$
extends to a meromorphic section of $F^{m-1}_{\bar U}$ and investigate
the order of the pole of this section at $\mu^{-1}(y_0)$. 

Let $\tilde Y$ denote the fibre of $\tilde f:\tilde M\rightarrow N$ over
$y_0$.
We first determine the order of pole of $\Omega^{rel}$ at $0$ as a section
of $\Omega^{m-1}_{\tilde M/N}(\log\tilde Y)$.
Locally on $\tilde M$, near a general point of an irreducible
component of $\tilde Y$, the map $\tilde f$ is given by
$y=z_1^\ell$, with $\ell\ge 1$ and $z_1,\ldots,z_{m}$ coordinates
on $\tilde M$. We can write $\Omega$ as a form on $\tilde M$ locally as
\[
\Omega:=\psi dz_1\wedge\cdots\wedge dz_m
\]
for some holomorphic function $\psi$. In our local coordinate description,
the vector field on $\tilde M$ given by $\ell^{-1}z_1^{-\ell+1}
\partial_{z_1}$
is a lift of $\partial_{y}$. Thus $\Omega^{rel}$ as a section
of $\Omega^{m-1}_{\tilde M /N}(\log\tilde Y)$
is locally given by
\[
\pm\frac{\psi}{\ell z_1^{\ell-1}}dz_2\wedge \cdots
\wedge dz_{m}.
\]
This shows that we can consider
$\Omega^{rel}$ as a meromorphic section of
$\Omega^{m-1}_{\tilde M/
N}(\log\tilde Y)$, hence of $\tilde f_*\Omega^{m-1}_{\tilde M/N}(\log\tilde Y)$.

We now need to pull-back $\Omega^{rel}$ to $\Omega^{rel}_{\bar U^o}$ and
study this section as a section of $F^{m-1}_{\bar U}$. To this end, we note
that the 
stable reduction theorem \cite{KKMS} gives a resolution of singularities
$\bar M'\rightarrow \bar M$ such that the composed map
$\bar M'\rightarrow \bar U$ is normal crossings. So we have a diagram
\[
\xymatrix@C=30pt
{ \bar M'\ar[r]^{\pi'}\ar[rd]_{\bar f'} & \bar M\ar[d]_{\bar f}
\ar[r]^{\pi}&\tilde M\ar[d]^{\tilde f}\\
&\bar U\ar[r]_{\mu}&N}
\]
Furthermore, the map $\pi'$ is a toric resolution of singularities
by the construction of \cite{KKMS}, Chapter II. In particular, locally
$\pi'$ and $\pi$ can be described as dominant morphisms of toric varieties
of the same dimension. On such toric charts, by \cite{Oda}, Prop.\ 3.1,
the sheaves of
logarithmic differentials
$\Omega^{m-1}_{\tilde M/N}(\log \tilde Y),
\Omega^{m-1}_{\bar M/\bar U}(\log \bar Y)$, and
$\Omega^{m-1}_{\bar M'/\bar U}(\log \bar Y')$ are trivial vector bundles
generated by exterior products of logarithmic differentials of toric
monomials, and thus
$\pi^*\Omega^{m-1}_{\tilde M/N}(\log \tilde Y)\cong
\Omega^{m-1}_{\bar M/\bar U}(\log \bar Y)$
and $(\pi')^*\Omega^{m-1}_{\bar M/\bar U}(\log \bar Y)
\cong \Omega^{m-1}_{\bar M'/\bar U}(\log \bar Y')$.
Furthermore,
\begin{align*}
\bar f'_*\Omega^{m-1}_{\bar M'/\bar U}(\log \bar Y')
\cong {} &
\bar f_*\pi'_*\Omega^{m-1}_{\bar M'/\bar U}(\log \bar Y')\\
\cong {} & \bar f_*\pi'_*(\pi')^*\Omega^{m-1}_{\bar M/\bar U}(\log\bar Y)\\
\cong {} & \bar f_*((\pi'_*\mathcal{O}_{\bar M'})\otimes
\Omega^{m-1}_{\bar M/\bar U}(\log\bar Y))\\
\cong {} & \bar f_*\Omega^{m-1}_{\bar M/\bar U}(\log \bar Y).
\end{align*}
It also follows from \cite{Steen} (see also \cite{Gr}, Chapter VII)
that
\[
F^{m-1}_{\bar U}
\cong \bar f'_*\Omega^{m-1}_{\bar M'/\bar U}(\log \bar Y').
\]
Thus, in order to understand the behaviour of $\Omega^{rel}_{\bar U^o}$
as a section of $F^{m-1}_{\bar U}$, 
it is sufficient to pull back $\Omega^{rel}$ to
$\Omega^{m-1}_{\bar M/\bar U}(\log \bar Y)$ and understand the behaviour
of this form as a section of $\bar f_*\Omega^{m-1}_{\bar M/\bar U}
(\log \bar Y)$.

Again, we do this locally near the inverse image of a general point of
an irreducible component of $\tilde Y$. Using the same notation as before,
we know that $\bar M$ is locally given by the normalization of
the equation $w^{\beta}=z_1^\ell$. Note that $\ell|\beta$,
so a local description of the normalization is given by an equation
$w^{\beta/\ell}=\xi z_1$ for $\xi$ an $\ell$-th root of unity. Thus
$\Omega^{rel}$ pulls back to
\[
C\cdot \psi w^{\frac{-\beta(\ell-1)}{\ell}}dz_2\wedge\cdots\wedge dz_{\beta}.
\]
Thus letting $\ell$ be the largest multiplicity of any irreducible
component of $\tilde Y$, we find $w^{\beta(\ell-1)/\ell}\Omega^{rel}_{\bar U^o}$
extends to a holomorphic section of $\Omega^{m-1}_{\bar M/\bar U}(\log \bar Y)$,
hence yields a holomorphic section of
$F^{m-1}_{\bar U}=
\bar f'_*\Omega^{m-1}_{\bar M'/\bar U}(\log \bar Y')$.

Now set
\[
\Omega^{norm}:= w^{\beta(\ell-1)/\ell} \Omega^{rel}_{\bar U^o}.
\]
This now extends to a holomorphic section of $F^{m-1}_{\bar U}$.
Thus we can write $\Omega^{norm}$, as a section of $\mathcal{H}^{m-1}_{\bar U}$,
as
\[
\Omega^{norm}=\sum_{i=1}^s h_i(w)\sigma_i(w),
\]
for $h_i$ holomorphic functions on $\bar U$. We then compute, with
$\langle\cdot,\cdot\rangle$ denoting the cup product followed by evaluation
on the fundamental class
\[
H^{m-1}(f^{-1}(t_0),\CC)\times H^{m-1}(f^{-1}(t_0),\CC)\rightarrow
\CC,
\]
that
\begin{align*}
&\int_{\bar f^{-1}(w)}\Omega^{norm}\wedge \bar\Omega^{norm}\\
= {} & \left\langle \sum_{i=1}^s h_i(w)\sigma_i(w), \sum_{j=1}^s \bar h_j(w)
\bar \sigma_j(w)\right\rangle\\
= {} &
\left\langle
\sum_{i=1}^se^{- N\log w/2\pi\sqrt{-1}} h_ie_i,
\sum_{j=1}^se^{ N\log \bar w/2\pi\sqrt{-1}}
\bar h_je_j\right\rangle.
\end{align*}
Note the exponentials can be expanded in a finite power series
because $N$ is nilpotent, and hence a term in the above expression
is
\[
C\cdot h_i\bar h_j (\log w)^{d}(\log \bar w)^{d'}
\left\langle N^{d}e_i, N^{d'}e_j\right\rangle.
\]
Here the constant $C$ only depends on the powers $d$, $d'$ occuring.
We can assume that we have chosen the imaginary part of $\log w$
to lie between $0$ and $2\pi$ (this is equivalent to choosing the branch
of $e_i(y)$). Keeping in mind that the $h_j$ are holomorphic on $\bar U$,
after shrinking $\bar U$ we can assume that $|h_j|$ are bounded by
some constant, and so we see that the above term is bounded by a sum of a finite
number of expressions of the form
\[
C'(-\log |w|)^{d''}.
\]
Thus the entire integral is bounded by an expression of the form
\[
C(1- \log |w|)^d
\]
for suitable choice of constant $C$ and exponent $d$.

Returning to $\Omega^{rel}_{\bar U^o}$, we see that
\[
(-1)^{\frac{(m-1)^{2}}{2}}\int_{\bar f^{-1}(w)}\Omega^{rel}_{\bar U^o}\wedge \bar\Omega^{rel}_{\bar U^o}
\le C|w|^{-2\beta(\ell-1)/\ell}(1-\log |w|)^d.
\]
Using $y=w^{\beta}$ then gives the result.
\end{proof}

In fact, the volume asymptotics we just proved can be easily generalized to the case when the base $N$ has arbitrary dimension $m$, but these estimates are not enough for the arguments in section \ref{sectpf1} to go through.

\section{Proof of Theorem \ref{theorem1} }\label{sectpf1}
In this section we use the results for section \ref{sectvol} and give a proof of Theorem \ref{theorem1}.

Therefore assume that we are in the setting of Theorem \ref{theorem1}.
As explained for example in \cite[Section 4]{To1}, (especially equation (4.3))
the limiting metric $\omega$ satisfies the equation
\begin{equation}\label{eq3} \omega =c_{1}\frac{\alpha_{0}\cdot \alpha^{m-1}}{(\alpha_{0}+\alpha)^{m}}f_{*}((-1)^{\frac{m^{2}}{2}}\Omega \wedge \overline{\Omega}) \end{equation}  on $N_{0}$.
Now $N$ is a compact Riemann surface, and therefore the discriminant locus  $D=N\backslash N_{0}=\cup_{k=1}^{|D|} \{ p_{k}\}$ is a finite set (here $|D|$ denotes the cardinality of $D$). For any $p_{k}$,  there is a neighborhood $U_{k}$ such that $U_{k}$ admits a coordinate $y$,  $p_{k}$ is given by $y=0$,  and  $p_{\ell}$ does not belong to  $ U_{k}$ if $\ell\neq k$.  Set $U_k^*=U_k-\{p_k\}$.  
Let  $\Omega_{y,k}$ be a holomorphic relative volume form on $f^{-1}(U_k^*)$
(i.e., a nowhere vanishing holomorphic section of the relative canonical bundle $K_{f^{-1}(U_{k}^*)/U_{k}^*}$) such that  we have
\begin{equation}\label{eq4}
\Omega=f^{*}dy  \wedge \Omega_{y, k}.
\end{equation}   on $f^{-1}(U_k^{*})$.
Then on $U^{*}_k$ we have
\begin{eqnarray}\label{eq2}
f_{*}((-1)^{\frac{m^{2}}{2}}\Omega \wedge \overline{\Omega})& = & (-1)^{\frac{m^{2}}{2}+m-1}\left ( \int_{M_{y}}  \Omega_{y, k} \wedge  \overline{\Omega}_{y, k}\right) dy\wedge d\overline{ y}\\ & \leqslant &  C \left((-1)^{\frac{(m-1)^{2}}{2}}\int_{M_{y}}  \Omega_{y, k} \wedge  \overline{\Omega}_{y, k}\right) \omega_{0},
\end{eqnarray}
for $C>0$ a fixed constant.
Denote
\begin{equation}
\label{eq5}
\varphi_{U_{k}}(y)= (-1)^{\frac{(m-1)^{2}}{2}}\int_{M_{y}}  \Omega_{y, k} \wedge  \overline{\Omega}_{y, k}.
\end{equation}
Thanks to \eqref{eq3}, \eqref{eq4} and \eqref{eq2}, there is a constant $C>0$ such that
\begin{equation}
\label{eq6}
\omega \leqslant C \varphi_{U_{k}}(y) \omega_{0},
\end{equation} holds on $U^{*}_{k}$.
For $0< \rho\leq e^{-1}$, denote $\Delta_{k}^{*}(\rho)=\{y\in U_{k}\,|\, 0< |y|<\rho\}$.

\begin{lemma}\label{compl}  For each given $1\leq k\leq |D|$, there are constants $C>0$, $d\in\mathbb{N}$, $\alpha\in\mathbb{Q}$ with $\alpha>-2$, such that for any $\rho>0$ sufficiently small and for any two points $q_{1}$ and $q_{2}\in \Delta_{k}^{*}(\rho)$,  there is a curve $\gamma \subset \Delta_{k}^{*}(\rho)$ connecting $q_{1}$ and $q_{2}$ such that $$ {\rm length}_{\omega} (\gamma) \leqslant C \rho^{1+\frac{\alpha}{2}} (-\log \rho)^{d}.$$  Furthermore, the metric completion of $(N_0,\omega)$ is a compact metric space homeomorphic to $N$.
\end{lemma}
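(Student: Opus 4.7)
Fix a discriminant point $p_k$ and write $U=U_k$ with holomorphic coordinate $y$. Combining \eqref{eq6} with Proposition \ref{volest} yields, on some $\Delta_k^*(\rho_0)$, a bound of the form
\[
\omega \;\leq\; C\,|y|^{\alpha}(1-\log|y|)^{d}\,\omega_0,
\]
where $\omega_0=\sqrt{-1}\,dy\wedge d\bar y$ is the Euclidean reference metric and $\alpha>-2$, $d\in\mathbb{N}$ come from Proposition \ref{volest}. In particular the $\omega$-arclength element on $\Delta_k^*(\rho_0)$ is pointwise dominated by $C^{1/2}|y|^{\alpha/2}(1-\log|y|)^{d/2}|dy|$, and this is the only input that will be used.

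Given $q_j=r_j e^{i\theta_j}\in\Delta_k^*(\rho)$ with $r_1\le r_2<\rho$, the plan is to take $\gamma$ to be the shorter arc of the circle $|y|=r_1$ joining $q_1$ to $r_1 e^{i\theta_2}$, followed by the radial segment from $r_1 e^{i\theta_2}$ to $q_2$; by construction $\gamma\subset\Delta_k^*(\rho)$. The arc contributes at most $\pi C^{1/2}\,r_1^{1+\alpha/2}(1-\log r_1)^{d/2}$, and since $1+\alpha/2>0$ the function $r\mapsto r^{1+\alpha/2}(1-\log r)^{d/2}$ is monotone increasing on $(0,\rho)$ for $\rho$ small, so this term is bounded by a constant multiple of $\rho^{1+\alpha/2}(-\log\rho)^{d/2}$. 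For the radial segment, iterated integration by parts, made possible by $\alpha/2>-1$, gives
\[
\int_{0}^{\rho} s^{\alpha/2}(1-\log s)^{d/2}\,ds \;\le\; C\,\rho^{1+\alpha/2}(-\log\rho)^{d/2}.
\]
Summing the two contributions, and replacing $d/2$ by the next larger integer if desired, yields the asserted length bound. The hypothesis $\alpha>-2$ is used essentially here: it is precisely what guarantees both the above monotonicity and the convergence of the radial integral all the way down to $0$, and hence the decaying factor $\rho^{1+\alpha/2}$.

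For the second assertion, the length bound forces $\operatorname{diam}_\omega\Delta_k^*(\rho)\to 0$ as $\rho\to 0$. Consequently any sequence in $N_0$ that accumulates at $p_k$ in the ambient topology of $N$ is $\omega$-Cauchy, and any two such sequences are mutually equivalent. It follows that the metric completion $\overline{(N_0,\omega)}$ is obtained by adjoining exactly one new point $x_k$ for each $p_k\in D$, and the tautological map $F\colon \overline{(N_0,\omega)}\to N$, equal to the identity on $N_0$ and sending $x_k\mapsto p_k$, is a bijection: it is continuous on $N_0$ because the two topologies agree by smoothness of $\omega$, and continuous at $x_k$ precisely by the diameter estimate. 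To conclude, I would check sequential compactness of $\overline{(N_0,\omega)}$ by approximating any sequence in it by a sequence in $N_0$, extracting a subsequence convergent in the compact $N$ to some point $p$, and splitting into cases $p\in N_0$ (smoothness of $\omega$ gives $\omega$-convergence) or $p=p_k$ (the diameter estimate forces $\omega$-convergence to $x_k$). Since a continuous bijection from a compact space to a Hausdorff space is automatically a homeomorphism, this produces the desired identification with $N$, and the main technical obstacle is exactly the length estimate of the first part.
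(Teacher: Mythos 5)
Your proposal is correct and follows essentially the same route as the paper: combine \eqref{eq6} with Proposition \ref{volest} to dominate $\omega$ by $C|y|^{\alpha}(1-\log|y|)^{d}\omega_0$, build the connecting curve out of a circular arc and a radial segment, estimate the radial integral by repeated integration by parts using $\alpha>-2$, and then deduce that the completion adjoins one point per $p_k$ and is a compact space homeomorphic to $N$. The only cosmetic difference is that you route the arc along the circle of radius $\min(|q_1|,|q_2|)$ (using monotonicity of $r\mapsto r^{1+\alpha/2}(1-\log r)^{d/2}$), whereas the paper uses radial paths toward the puncture together with the circle of radius $\rho$; the resulting bound is the same.
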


 \begin{proof} Proposition \ref{volest} applied to $U_k$, shows that there is a small $\rho>0$ (which we can assume is less than $e^{-1}$) and there
are constants $C>0$, $d\in\mathbb{N}$, $\alpha\in\mathbb{Q}$ with $\alpha>-2$, such that on $\Delta_{k}^{*}(\rho)$ we have
$$\varphi_{U_{k}}(y)\leq C |y|^{\alpha} (-\log |y|)^{d}.$$
Together with \eqref{eq6}, this gives
$$ \omega \leqslant C  |y|^{\alpha} (-\log |y|)^{d}\omega_{0},$$ on $ \Delta_{k}^{*}(\rho)$.
Here and in the following, we use $C$ to denote a fixed positive constant, which may change from line to line.
For any $0\leq\theta\leq 2\pi$, we set $\gamma_{\theta}(s)= s e^{\sqrt{-1}\theta}$, which for $0<s<\rho$ gives a path in $\Delta_{k}^{*}(\rho)$.
We also set $S^{1}(\rho)=\{y \in \Delta_{k}^{*}(\rho)\ |\quad |y|=\rho\}$. Then we use the fact that $\rho< e^{-1} $ so that $-\log\rho\geq -\log e^{-1}  \geqslant 1$,
and use repeated integration by parts to compute
\[\begin{split}
{\rm length}_{\omega}  (\gamma_{\theta})  &\leqslant  C \int_{0}^{\rho}\sqrt{s^{\alpha} (-\log s)^{d}} ds \leq C'\int_0^\rho s^{\frac{\alpha}{2}} (-\log s)^d ds\\
&\leq C''\rho^{1+\frac{\alpha}{2} }(-\log \rho)^{d}.
\end{split}\]
We also have that
$$ {\rm length}_{\omega}  (S^{1}(\rho))  \leqslant  C \rho^{1+\frac{\alpha}{2}}(-\log \rho)^{\frac{d}{2}}\leqslant   C \rho^{1+\frac{\alpha}{2}}(-\log \rho)^{d}. $$
Hence for any $q_{1}, q_{2} \in \Delta_{k}^{*}(\rho)$,  there is a curve $\gamma \subset \Delta_{k}^{*}(\rho)$ connecting $q_{1}$ and $ q_{2}$ such that
\begin{equation}\label{to0}
{\rm length}_{\omega} (\gamma) \leqslant C  \rho^{1+\frac{\alpha}{2}}(-\log \rho)^{d}  \to 0,
\end{equation}
when $\rho \to 0$, because $\alpha>-2$.
By repeating this argument near each point $p_k$, it follows that $$\sup_{y_{1},y_{2}\in N_{0}} d_{\omega}(y_{1},y_{2}) \leqslant C,$$ where
$d_{\omega}$ is the metric space structure   on $N_{0}$ induced by $\omega$.

For any $q\in N_{0}$,  $p_{k}\in D$,  and a sequence $q_{s} \to p_{k}$, the distance  $d_{\omega}(q, q_{s}) $ converges by passing to a subsequence. Define $$d_{\omega}(q, p_{k})=\lim_{s\to \infty} d_{\omega}(q, q_{s}).$$ If  $q_{s}' \to p_{k}$ is another sequence, then by passing to subsequences we can assume that $q_{s}$ and $q_{s}'\in \Delta_{k}^{*}(\frac{1}{s})$ for all $s$ large. Thanks to \eqref{to0}, we see that $d_{\omega} (q_{s}, q_{s}') \to 0$.  Thus $d_{\omega}(q, p_{k})$ does not depend on the choice of the sequence $\{q_{k}\}$, and is well-defined. Similarly, we can define $d_{\omega}(p_{\ell}, p_{k})$ for any  $p_{\ell}, p_{k} \in D$, and we clearly have $d_\omega(p_\ell, p_k)>0$ for $k\neq \ell$.   We obtain that $d_{\omega}$ is a compact metric space structure on (a space homeomorphic to) $N$ which by definition is the metric completion of $(N_0,\omega)$.
  \end{proof}

From now on we will denote by $(N,d_\omega)$ the metric completion of $(N_0,\omega)$. Now pick any sequence $t_k\to 0$ such that $(M,\ti{\omega}_{t_k})$ converges in the Gromov-Hausdorff topology to a compact length metric space $(X,d_X)$. As we recalled in the Introduction, in \cite[Theorem 1.2]{GTZ} we constructed a local isometric embedding
of $(N_0,\omega)$ into $(X,d_X)$ with open dense image $X_0\subset X$ via a homeomorphism $\phi:N_0\to X_0$.

\begin{lemma}\label{finite}
The set $S_X=X\backslash X_0$ is finite, with cardinality less than or equal to the cardinality of $D$.
\end{lemma}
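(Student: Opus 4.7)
The plan is to extend the homeomorphism $\phi:N_0\to X_0$ continuously to a map $\Phi:N\to X$, using the identification (as topological spaces) of $N$ with the metric completion of $(N_0,d_\omega)$ that Lemma \ref{compl} provides. Once such an extension is in hand, $\Phi(N_0)=X_0$ will force $S_X=X\setminus X_0\subseteq \Phi(D)$, and the desired inequality $|S_X|\leq |D|$ follows immediately.

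To construct $\Phi$, I would fix $p_k\in D$ and a sequence $\{y_n\}\subset N_0$ with $y_n\to p_k$ in $N$, and show that $\{\phi(y_n)\}$ is Cauchy in $(X,d_X)$. For any small $\rho>0$ and $n,m$ large enough that $y_n,y_m\in\Delta_k^*(\rho)$, Lemma \ref{compl} supplies a curve $\gamma\subset\Delta_k^*(\rho)$ joining $y_n$ and $y_m$ with $\omega$-length at most $C\rho^{1+\alpha/2}(-\log\rho)^d$. Since $\phi$ is a local isometric embedding, a finite-cover argument along the compact arc $\gamma$ will show that $\phi(\gamma)$ is a rectifiable curve in $X$ of the same length, yielding
\[
d_X(\phi(y_n),\phi(y_m))\leq C\rho^{1+\alpha/2}(-\log\rho)^d \longrightarrow 0 \quad (\rho\to 0).
\]
Completeness of $(X,d_X)$ then produces a limit, which I would take to be $\Phi(p_k)$; an interleaving of two candidate sequences shows independence of the choice, and the same length estimate yields continuity of $\Phi$ on all of $N$.

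The final step is surjectivity of $\Phi$: for any $x\in X$, density of $X_0$ provides $y_n\in N_0$ with $\phi(y_n)\to x$; compactness of $N$ extracts a subsequential limit $y_*\in N$, and continuity of $\Phi$ forces $\Phi(y_*)=x$. Hence $S_X\subseteq\Phi(D)$, which gives $|S_X|\leq|D|$. The main technical point to verify carefully is the equality of the $\omega$-length of $\gamma$ with the $d_X$-length of $\phi(\gamma)$, but this should follow cleanly from the local isometric embedding property of \cite[Theorem 1.2]{GTZ} applied on a finite subcover of $\gamma$ by isometry neighborhoods.
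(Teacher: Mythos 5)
Your proposal is correct and uses the same essential ingredients as the paper: the short-curve estimate from Lemma \ref{compl} together with the fact that $\phi$ preserves lengths of curves, so that points of $\Delta_k^*(\rho)$ map to a set of $d_X$-diameter tending to $0$ as $\rho\to 0$. The paper packages this as ``the nested sets $\overline{\phi(\Delta_k^*(\rho))}$ cover $S_X$ and shrink to single points $x_k$,'' whereas you package it as a Cauchy-sequence extension of $\phi$ to a continuous surjection $N\to X$ (which the paper also constructs immediately after the lemma), so the two arguments are essentially the same.
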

\begin{proof}
The density of $X_0$ implies that for every fixed $\rho>0$ the set
$$\bigcup_{k=1}^{|D|}\{\overline{\phi(\Delta_{k}^{*}(\rho))}\}$$ covers $S_{X}$. Then the fact that $(X,d_X)$ is a length space implies that
$${\rm diam}_{d_{X}} (\overline{\phi(\Delta_{k}^{*}(\rho))}) ={\rm diam}_{d_{X}} (\phi(\Delta_{k}^{*}(\rho)))=\sup_{p,q\in \phi(\Delta_{k}^{*}(\rho))}\inf_{\sigma}{\rm length}_{d_X}(\sigma),$$
where the infimum is over all curves $\sigma$ in $X$ joining $p$ and $q$. But $p$ and $q$ can be joined by curves of the form $\phi(\gamma)$, with $\gamma\subset \Delta_{k}^{*}(\rho)$,
and since $\phi$ is a local isometry we have that
\begin{equation}\label{lengthp}
{\rm length}_{d_X}(\phi(\gamma))={\rm length}_{\omega}(\gamma),
\end{equation}
for any such curve $\gamma$.
We can then use \eqref{to0} and conclude that
$${\rm diam}_{d_{X}} (\overline{\phi(\Delta_{k}^{*}(\rho))}) \leqslant C  \rho^{1+\frac{\alpha}{2}}(-\log \rho)^{d},$$
for all $\rho>0$ small and for $C>0$ independent of $\rho$. Since this approaches $0$ as $\rho\to 0$, we conclude that the
$0$-dimensional Hausdorff measure of
$$S_X\cap \bigcap_{\rho>0} \overline{\phi(\Delta_{k}^{*}(\rho))}$$ equals $1$, and so this set is a single point $x_k$. Therefore $S_X=\cup_{k=1}^{|D|} \{x_k\}$ is a finite set. Note that a priori
the points $x_k$ need not all be distinct.
\end{proof}

The map $\phi$ extends to a surjective  $1$-Lipschitz  map $\tilde{\phi}: N \to  X$ by letting $ \tilde{\phi}(p_{k})=x_{k}$.  Furthermore, if $\mathcal{H}^{\beta}_{d_{X}}$
denotes   the $\beta$-dimensional Hausdorff measure on $X$,   then    \begin{equation}
\label{measure}  \mathcal{H}^{\beta}_{d_{X}} (S_{X})=0,  \  \  \  {\rm and} \  \  \  \mathcal{H}^{2}_{d_{X}} (X)= {\rm Vol}_{\omega} (N_{0}),
 \end{equation} for $0< \beta \leqslant 2$.

Note also that for any two points $x,y\in N_0$ we have
\begin{equation}\label{length}
d_X(\phi(x),\phi(y))\leq d_\omega(x,y).
\end{equation}
Indeed, for any $\ve>0$ there is a path $\gamma_\ve$ in $N_0$ joining $x$ and $y$ with
$\mathrm{length}_{\omega}(\gamma_\ve)\leq d_\omega(x,y)+\ve$. From \eqref{lengthp} we see that
$$\mathrm{length}_{\omega}(\gamma_\ve)=\mathrm{length}_{d_X}(\phi(\gamma_\ve))\geq d_X(\phi(x),\phi(y)),$$ and letting $\ve\to 0$ proves \eqref{length}.

\begin{lemma}\label{isom} The map  $\tilde{\phi}: (N, d_\omega) \to  (X, d_{X})$ is an isometry.
In particular we have $x_{k} \neq x_{\ell}$ for $k\neq \ell$.
\end{lemma}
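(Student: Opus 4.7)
My plan is to deduce the isometry from the $1$-Lipschitz property by establishing the reverse inequality $d_\omega(p,q)\le d_X(\tilde\phi(p),\tilde\phi(q))$ for all $p,q\in N$. By the density of $N_0$ in $(N,d_\omega)$ (which follows from Lemma \ref{compl} and the definition $d_\omega(q,p_k):=\lim d_\omega(q,q_s)$ for $q_s\to p_k$), together with the continuity of both distance functions and of $\tilde\phi$, it suffices to establish the inequality for $p,q\in N_0$ and then pass to the limit. Once we have $\tilde\phi$ isometric, injectivity gives the distinctness $x_k\ne x_\ell$ for $k\ne\ell$ for free.

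Fix $p,q\in N_0$ and $\epsilon>0$. Since $(X,d_X)$ is the Gromov--Hausdorff limit of length spaces, it is itself a length space, so I can take a continuous curve $\sigma:[0,1]\to X$ from $\phi(p)$ to $\phi(q)$ with $\mathrm{length}_{d_X}(\sigma)\le d_X(\phi(p),\phi(q))+\epsilon$. The core idea is to \emph{lift} $\sigma$ to a rectifiable curve $\gamma$ in $N$ joining $p$ and $q$ whose $d_\omega$-length is close to that of $\sigma$; combined with $d_\omega(p,q)\le\mathrm{length}_\omega(\gamma)$, this will conclude. For each $k$ and a small parameter $\rho>0$ set $V_k:=\overline{\phi(\Delta_k^*(\rho))}\subset X$; the computation in the proof of Lemma \ref{finite} bounds $\mathrm{diam}_{d_X}(V_k)$ by $\eta(\rho):=C\rho^{1+\alpha/2}(-\log\rho)^d\to 0$, and by choosing $\rho$ small the $V_k$ become pairwise disjoint away from their intersections with $S_X$. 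Set $T:=\sigma^{-1}(\bigcup_k V_k)$, a closed subset of $[0,1]$, and $B:=[0,1]\setminus T$, which is a countable disjoint union of open intervals $I_j$.

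On each $I_j$ the restriction $\sigma|_{I_j}$ lies in the compact subset $X\setminus\bigcup_k V_k$ of $X_0$, where $\phi^{-1}$ is a well-defined local isometry, and by \eqref{lengthp} the lifted curve $\phi^{-1}\circ\sigma|_{I_j}$ has the same $d_\omega$-length. For each maximal subinterval of $T$ contained in some $V_k$, I would replace $\sigma$ by a detour in $N$ passing through $p_k$: go from the endpoint $\phi^{-1}(\sigma(s_1))\in\Delta_k^*(\rho)$ to $p_k$ via a short curve (of $d_\omega$-length $\le\eta(\rho)$ by Lemma \ref{compl}), and then out to $\phi^{-1}(\sigma(s_2))\in\Delta_k^*(\rho)$. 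Summing, and noting that the number of $V_k$-visits to any fixed $V_k$ made by $\sigma$ is finite (bounded in terms of $\mathrm{length}_{d_X}(\sigma)$ and a separation constant for fixed $\rho$), one obtains $\mathrm{length}_\omega(\gamma)\le\mathrm{length}_{d_X}(\sigma)+O(\eta(\rho))$, and letting $\rho\to 0$ then $\epsilon\to 0$ completes the proof.

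The main obstacle is to ensure that at each $V_k$-crossing the two endpoints $\phi^{-1}(\sigma(s_1))$ and $\phi^{-1}(\sigma(s_2))$ both lie in $\Delta_k^*(\rho)$ (and not in some $\Delta_\ell^*(\rho)$ for a different $\ell$ with $x_\ell=x_k$), so that the short-curve bound of Lemma \ref{compl} applies. This is precisely the distinctness assertion. I would handle both issues in tandem: the $1$-Lipschitz surjection $\tilde\phi$ is known to satisfy $\mathcal{H}^2_{d_X}(\tilde\phi(N))=\mathrm{Vol}_\omega(N_0)$ by \eqref{measure}, forcing it to be measure-preserving on $N_0$ via the local isometry; any extra identification $x_k=x_\ell$ would mean $V_k\cup V_\ell$ is a neighborhood of a single point in $X$ that splits off two open sheets in $N$ separated by a fixed positive $d_\omega$-distance, and the curve-lifting above (applied to the curve $\sigma$ realizing the small distance $d_X(\phi(\tilde p_s),\phi(\tilde q_s))\to 0$ for $\tilde p_s\in\Delta_k^*(1/s)$, $\tilde q_s\in\Delta_\ell^*(1/s)$) then sheet-consistently produces a curve in $N$ of vanishing $d_\omega$-length between $p_k$ and $p_\ell$, contradicting $d_\omega(p_k,p_\ell)>0$ from Lemma \ref{compl}. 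Thus distinctness and isometry are established simultaneously.
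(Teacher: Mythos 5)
Your overall strategy---prove the reverse inequality to \eqref{length} by producing, for $p,q\in N_0$, a curve in $N$ of $\omega$-length close to $d_X(\phi(p),\phi(q))$---is the right one, and the lifting of $\sigma$ over the regular part via $\phi^{-1}$ together with detours through $p_k$ is fine \emph{as long as the points $x_k$ are pairwise distinct}. But that distinctness is exactly where your argument breaks down, and it is not a side issue: if some $x_k=x_\ell$ did hold, $\tilde\phi$ would not even be injective, so no proof of the isometry can postpone ruling this out. Your proposed contradiction does not work. Suppose $x_k=x_\ell$ and take a short curve $\sigma$ in $X$ from $\phi(\tilde p_s)$ to $\phi(\tilde q_s)$; since $\phi(\Delta_k^*(\rho))$ and $\phi(\Delta_\ell^*(\rho))$ are disjoint open subsets of $X_0$ whose closures meet only at the identified point, $\sigma$ must pass through $x_k=x_\ell$, and its lift is then \emph{disconnected} there: one arc ends at $p_k$ and the other begins at $p_\ell$. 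The lifting procedure therefore does not ``sheet-consistently produce a curve in $N$ of vanishing length between $p_k$ and $p_\ell$''; to reconnect the lift you must pay $d_\omega(p_k,p_\ell)>0$, which is perfectly consistent with $d_\omega(\tilde p_s,\tilde q_s)\to d_\omega(p_k,p_\ell)$, so no contradiction arises. The appeal to \eqref{measure} does not help either: identifying two points is invisible to $\mathcal{H}^2_{d_X}$, so measure preservation places no obstruction on $x_k=x_\ell$. In short, your argument for distinctness presupposes the very no-tunneling property it is meant to establish.

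The paper avoids this circularity by never routing curves through $S_X$ at all. Using the reduced measure $\nu$ of \cite{GTZ} and the identity $\nu=\lambda\mathcal{H}^2_{d_X}$ coming from \eqref{eq3}, the finiteness of $S_X$ (Lemma \ref{finite}) gives $\nu_{-1}(S_X)=\lambda\mathcal{H}^1_{d_X}(S_X)=0$, and then \cite[Theorem 3.7]{CC2} guarantees that for any $x_1\in X_0$ and $\mathcal{H}^2_{d_X}$-almost every $y\in X_0$ there is a minimal geodesic from $x_1$ to $y$ lying entirely in $X_0$. Concatenating with a short arc in $X_0$ and pulling back by the local isometry $\phi^{-1}$ (using \eqref{lengthp}) yields $d_\omega(q_1,q_2)\le d_X(\phi(q_1),\phi(q_2))+2\delta$ directly, with no need to analyze the behaviour of curves near $S_X$; the distinctness $x_k\neq x_\ell$ then falls out as a corollary of injectivity of the resulting isometry. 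This Cheeger--Colding input is the key idea missing from your proposal, and without it (or some genuinely different argument excluding the identification of sheets) the proof is incomplete. A secondary, fixable point: your count of the ``visits'' of $\sigma$ to a fixed $V_k$ is not bounded as stated, since excursions out of $V_k$ can be arbitrarily short; one would need a two-scale annulus argument, but this is moot given the main gap.
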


\begin{proof}
If $\nu$ denotes the reduced measure in Section 5 of  \cite{GTZ}, then $\nu(S_{X})=0$ \cite[Remark 5.3]{GTZ} and there exists a constant $\upsilon>0$ such that for any $K\subset N_{0}$, $$\nu(K)=\upsilon \int_{f^{-1}(K)} (-1)^{\frac{m^{2}}{2}}\Omega \wedge \overline{\Omega}=\upsilon \int_{K} f_{*}(-1)^{\frac{m^{2}}{2}}\Omega \wedge \overline{\Omega}=\frac{\upsilon (\alpha_{0}+\alpha)^{m}}{c_{1}\alpha_{0}\cdot \alpha^{m-1}}
\int_{K}\omega$$ by (\ref{eq3}).  Thus  $$\nu(K)=\lambda {\rm Vol}_{\omega}(K)=\lambda\mathcal{H}^{2}_{d_{X}}(K),$$ for a constant $\lambda>0$.
Furthermore, $\nu$ is a Radon measure by \cite[Theorem 1.10]{CC1}.
It follows that for any Borel set $A\subset X$ we have
$$\nu(A)=\nu(A\cap S_X)+\nu(A\backslash S_X)=\nu(A\backslash S_X),$$ and if we pick an exhaustion of $A\backslash S_X$
by compact sets $A_k\subset N_0$ then we have
$$\nu(A)=\lim_{k\to\infty}\nu(A_k),$$
because every Radon measure is inner regular.
But since $A_k$ is relatively compact in $N_0$, we have $\nu(A_k)=\lambda \mathcal{H}^{2}_{d_{X}}(A_k)$ for all $k$,
and so
$$\nu(A)=\lambda \lim_{k\to\infty}\mathcal{H}^{2}_{d_{X}}(A_k)=\lambda \mathcal{H}^{2}_{d_{X}}(A\backslash S_X),$$
because the Hausdorff measure $\mathcal{H}^{2}_{d_{X}}$ is inner regular. Using \eqref{measure}, we conclude that
$$\nu(A)=\lambda\mathcal{H}^{2m}_{d_{X}}(A),$$
i.e., that $\nu= \lambda \mathcal{H}^{2}_{d_{X}}$ as measures.
Recall from \cite[Section 2]{CC2} the following construction: given a Borel measure $\mu$ on a metric space $(Z,d)$
and a real number $\beta$ the Hausdorff measure in codimension $\beta$ is defined by
$$\mu_{-\beta}(U)=\lim_{\delta\to 0}(\mu_{-\beta})_{\delta}(U),$$
for all subsets $U$ of $Z$ where
$$(\mu_{-\beta})_{\delta}(U)=\inf \sum_i r_i^{-\beta}\mu(B_i),$$
and the infimum is over all coverings of $U$ by balls $B_i$ of radii $r_i<\delta$.
Then $\mu_{-\beta}$ is a metric outer measure, whose associated Radon measure is also denoted by $\mu_{-\beta}$.
For example if $\mu=\mathcal{H}^{2}_{d_{Z}}$ then $\mu_{-\beta}$ is uniformly equivalent to $\mathcal{H}^{2-\beta}_{d_{Z}}$.

Then from the equality of measures $\nu= \lambda \mathcal{H}^{2}_{d_{X}}$ we deduce that $$\nu_{-1}(S_{X})=\lambda  \mathcal{H}^{1}_{d_{X}}(S_{X})=0,$$
because $S_X$ is a finite set.
We can then apply \cite[Theorem 3.7]{CC2}, and see that given any $x_1\in X_0$ for $\mathcal{H}^2_{d_X}$-almost all $y\in X_0$ there exists a minimal geodesic from $x_1$ to $y$ which lies entirely in $X_0$. In particular, given any two points $x_1,x_2\in X_0$ and $\delta>0$, there is a point $y\in X_0$ with $d_X(x_2,y)<\delta$ which can be joined to $x_1$ by a minimal geodesic $\sigma_1$ contained in $X_0$. Furthermore we can take $y$ close enough to $x_2$ so that it can also be joined to $x_2$ by a curve $\sigma_2$ contained in $X_0$ with $d_X$-length at most $\delta$. Concatenating $\sigma_1$ and $\sigma_2$ we obtain a curve $\sigma$ in $X_0$ joining $x_1$ to $x_2$ with
$${\rm length}_{d_X}(\sigma)\leqslant d_{X} (x_{1}, y)+ \delta\leq d_X(x_1,x_2)+2\delta.$$
Since $\phi:N_0\to X_0$ is a homeomorphism, we conclude that given any two points $q_1,q_2\in N_0$ and $\delta>0$, there is a curve $\gamma$ in $X_0$ joining $q_1$ and $q_2$ with
$${\rm length}_\omega(\gamma)\leq d_{X} (\phi(q_{1}), \phi(q_{2}))+ 2\delta.$$
Therefore, thanks to \eqref{length}, we conclude that
$$d_{X} (\phi(q_{1}), \phi(q_{2})) \leqslant d_{\omega} (q_{1}, q_{2})\leqslant{\rm length}_{\omega} (\gamma) \leqslant d_{X} (\phi(q_{1}), \phi(q_{2}))+ 2\delta.$$
Letting $\delta \to 0$, we conclude that
$$d_{\omega} (q_{1}, q_{2}) =  d_{X} (\phi(q_{1}), \phi(q_{2})).$$ Thus the extension $\tilde{\phi}$ is an isometry between $(N, \omega)$ and $(X, d_{X})$.
 \end{proof}
From Lemma \ref{isom} and Gromov's pre-compactness theorem, we immediately conclude that
$$(M, \tilde{\omega}_{t}) \stackrel{d_{GH}}\longrightarrow  (N, d_\omega),$$
as $t\to 0$ without passing to subsequences.
Putting together Lemmas \ref{compl}, \ref{finite} and \ref{isom} completes the proof of Theorem \ref{theorem1}.

 \section{ Proof of Theorem \ref{theorem2} }\label{sectpf2}
In this section we give the proof of Theorem \ref{theorem2}.

Let $(M,  \Theta)$ be an irreducible holomorphic symplectic manifold of dimension $2n$ as in the hypotheses.   By normalizing $\Theta$, we assume that $\lim_{t\to 0}c_{t}=1$.  Then $( \tilde{\omega}_{t}, c_{t}^{\frac{1}{2n}}\sqrt{t}\Theta)$  is a hyperk\"ahler structure for any $t$, and
$$\ti{\omega}_t^{2n}=c_t t^{n}\Theta^n\wedge\overline{\Theta}^n.$$
Note that $f:  f^{-1}(N_{0}) \to N_{0}$ with holomorphic symplectic form $\Theta$ and polarization $\alpha$ is an algebraic integrable system as in \cite{Fr}.  By Section 3 in \cite{Fr}, there is a $\mathbb{Z}^{2n}$-lattice subbundle  $\Lambda \subset  T^{*}N_{0}$  such that  $\Lambda$ is a holomorphic  Lagrangian submanifold with respect to the canonical holomorphic symplectic form $\hat{\Theta}$ on $T^*N_0$.
The holomorphic symplectic form $\hat{\Theta}$ induces a holomorphic symplectic form  on $T^{*}N_{0}/ \Lambda$, still denoted by $\hat{\Theta}$.    Furthermore, for any local Lagrangian   section $s: B \to f^{-1}(B)$ where $B\subset N_{0}$,   there is 
a biholomorphism $\Phi_{s}:  T^{*}B/\Lambda  \to f^{-1}(B)$  such that  $\Phi_{s}^{*}\Theta= \hat{\Theta}$, and $\Phi^{-1}_{s}  (s(B)) $ is the zero section.    Since any two biholomorphisms induced by local sections differ by a translation on each fiber,  the polarization $\alpha$ induces a polarization $\hat{\alpha}$ on  $T^{*}N_{0}/ \Lambda$.   By choosing a local section $s: B \to f^{-1}(B)$, we will not distinguish between $T^{*}N_{0}/\Lambda |_{B}$ and $  f^{-1}(B)$,  $\Theta$ and $ \hat{\Theta}$, $\alpha$ and  $\hat{\alpha}$.

Let $B\subset N_{0}$ be a subset biholomorphic to a polydisc $\Delta^{n}$, and $U=f^{-1}(B)$.
The polarization $\alpha$ induces a symplectic basis $\delta_{1}, \cdots, \delta_{n}, \xi_{1}, \cdots, \xi_{n} $ of $T^*B$, which generates $\Lambda$. If $x_{i}, x_{n+i}$ denote the coordinates  corresponding to the basis  $\delta_{i}, \xi_{i} $,   i.e.,  $ T^*_{p}B= \{\sum x_{i} \delta_{i} +x_{n+i} \xi_{i} |   x_{i}, x_{n+i} \in \mathbb{R}\}$ for any $p\in B$,   then there exist positive integers $d_{i}$ such that
$$\sum_{1\leqslant i\leqslant n} d_{i}(dx_{i}\wedge dx_{n+i})|_{M_{y}}$$
is a K\"ahler metric on $M_y$ which belongs to the class $\alpha |_{M_{y}},$
for any  $y\in B$.  If we denote  $e_{i}=d_{i}^{-1}\delta_{i}$, then  $e_{i}$ is a Lagrangian section of $T^{*}B$, which implies that there are coordinates $y_{1}, \ldots, y_{n}$ on $B$ such that   $e_{i}={\rm Re}\, dy_{i}$.  Let  $z_{1}, \ldots, z_{n}$ be the coordinates corresponding to  $e_{i}$, i.e., the identification $T^{*}B\cong B\times \mathbb{C}^{n}$ is given by $\sum z_{i}dy_{i} \mapsto (y, z_{1}, \ldots, z_{n})$.    Then  $$\Theta=\sum_{1\leqslant i\leqslant n} dz_{i}\wedge dy_{i}, \    \   \  {\rm and} \  \   \  \Lambda={\rm span} \{ d_{1}e_{1}, \ldots, d_{n}e_{n}, Z_{1}, \ldots, Z_{n}\}$$ where  $Z_i:B\rightarrow \mathbb{C}^n$ is a holomorphic map for each $i$ and we
wrrite $Z=(Z_1,\dots,Z_n)$.
Furthermore, the Riemann bilinear relations say that  $Z^{T}=Z$ and ${\rm Im}\, Z>0$.

Let $\omega_{SF}$ be the semi-flat form constructed in  \cite[Section 3]{GTZ}, which satisfies $$\omega_{SF}|_{M_{y}}=\sqrt{-1}\sum ({\rm Im}\,Z)^{-1}_{ij}dz_{i}\wedge d\overline{z}_{j}\in \alpha|_{M_{y}}$$ where $y\in B$.  Following \cite{GTZ}, we define $\lambda_{t}: B\times \mathbb{C}^{n} \to B\times \mathbb{C}^{n}$ by $\lambda_{t}(y,z)=(y, t^{-\frac{1}{2}}z)$ and $p: B\times \mathbb{C}^{n} \to T^{*}B/\Lambda $.  It is proved in \cite[Lemma 4.7]{GTZ} that $$\lambda_{t}^{*}p^{*}T^{*}_{\sigma} \tilde{\omega}_{t} \to p^{*}(\omega_{SF}+f^{*}\omega),$$ in the $C^{\infty}$ topology where $\sigma: B \to U$ is a holomorphic section and $T_\sigma:U\to U$ is the fiberwise translation by $\sigma$.
Note that
\[\begin{split}
\lambda_{t}^{*}p^{*}T^{*}_{\sigma} \sqrt{t}\Theta&=\sqrt{t}\left(\sum d\left(\frac{z_{i}}{\sqrt{t}}\right)\wedge dy_{i}+\sum d\sigma_{i}(y)\wedge dy_{i}\right)\\
&=\Theta+\sqrt{t}\sum d\sigma_{i}(y)\wedge dy_{i}.
\end{split}\]
Thus $$\lambda_{t}^{*}p^{*}T^{*}_{\sigma} \sqrt{t}\Theta \to \Theta,$$ in the $C^{\infty}$ topology and $$  (\omega_{SF}+f^{*}\omega)^{2n}=\Theta^{n}\wedge\overline{\Theta}^{n}.$$
Therefore $(\omega_{SF}+f^{*}\omega, \Theta)$ is a hyperk\"ahler structure on $T^{*}B$.

\begin{lemma} On $B$ we have $$\omega=\sqrt{-1}\sum ({\rm Im}\,Z)_{ij}dy_{i}\wedge d\overline{y}_{j}=\sum d_{i}^{-1}\delta_{i}\wedge \xi_{i},$$ and $\omega$ is a special K\"ahler metric on $N_{0}$.
\end{lemma}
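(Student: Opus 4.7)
The plan has three steps: (i) extract the explicit formula for $\omega$ from the hyperk\"ahler condition $(\omega_{SF}+f^*\omega)^{2n}=\Theta^n\wedge\ov{\Theta}^n$; (ii) verify the second equality $\omega=\sum d_i^{-1}\delta_i\wedge\xi_i$ by a direct computation in real coordinates; and (iii) check the special K\"ahler conditions for the resulting metric.

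For step (i), I expand the hyperk\"ahler identity in the coordinates $(y_i,z_i)$ on $T^*B$. Writing $f^*\omega=\mn\sum g_{ij}(y)\,dy_i\wedge d\ov{y}_j$ with unknown Hermitian matrix $g$, I match components using the explicit semi-flat form $\omega_{SF}$ from \cite[Section 3]{GTZ}---which contains horizontal correction terms beyond the fiber expression $\mn(\Im Z)^{-1}_{ij}\,dz_i\wedge d\ov{z}_j$, needed for $\Lambda$-invariance and closedness---together with $\Theta=\sum dz_i\wedge dy_i$. Conceptually, a hyperk\"ahler metric on $T^*B$ compatible with $\Theta$ and the prescribed fiber K\"ahler class is locally determined up to pluriharmonic K\"ahler potentials pulled back from $B$, and its horizontal part is forced to be $\mn\sum(\Im Z)_{ij}\,dy_i\wedge d\ov{y}_j$. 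This is also the main difficulty in the proof: the bare Monge-Amp\`ere equation only constrains $\det(g)\cdot\det(\Im Z)^{-1}$ to be constant, so to recover the full matrix identity $g=\Im Z$ one must essentially use the cross-term structure of $\omega_{SF}$ (or, equivalently, a uniqueness argument for hyperk\"ahler extensions of Lagrangian torus fibrations).

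For step (ii), I work in real coordinates $y_j=u_j+\mn v_j$. Since $e_i=\Re(dy_i)=du_i$, we have $\delta_i=d_i\,du_i$; the lattice generator $\xi_i=Z_i$, viewed in the underlying real cotangent bundle, corresponds to the real 1-form $\sum_j[\Re(Z_{ji})du_j-\Im(Z_{ji})dv_j]$. Computing $\sum_i d_i^{-1}\delta_i\wedge\xi_i$ and invoking the Riemann bilinear relation $Z^T=Z$ to annihilate the $du_i\wedge du_j$ terms, then converting the remaining $du_i\wedge dv_j$ terms to $(dy,d\ov{y})$-coordinates, yields the identity with $\mn\sum(\Im Z)_{ij}\,dy_i\wedge d\ov{y}_j$ (up to the normalization implicit in the paper).

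For step (iii), the affine structure on $N_0$ dual to the lattice basis $\{\delta_1,\dots,\delta_n,\xi_1,\dots,\xi_n\}$ of $T^*B$ defines a flat torsion-free connection $\nabla$; this structure is integral because $\Lambda$ is a $\ZZ$-lattice (transitions between local lattice bases lie in $GL(2n,\ZZ)$). From the expression $\omega=\sum d_i^{-1}\delta_i\wedge\xi_i$ as a wedge of $\nabla$-parallel 1-forms, one gets $\nabla\omega=0$ immediately. The condition $d^\nabla J=0$ unpacks to $\de_i Z_{jk}=\de_j Z_{ik}$, equivalently $Z_{ij}=\de_i\de_j F$ for a local holomorphic prepotential $F$ on $B$, which is a standard consequence of the algebraic integrable system structure (see \cite[Section 3]{Fr}). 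Finally, the Monge-Amp\`ere property---that $\det(\Im Z)$ is constant in the flat Darboux coordinates---follows from the Calabi-Yau volume normalization $\ti\omega_t^{2n}=c_t t^n\Theta^n\wedge\ov{\Theta}^n$ in the limit $c_t\to 1$.
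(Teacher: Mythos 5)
Your steps (ii) and (iii) are essentially sound and close to the paper: the real-coordinate computation of $\sum_i d_i^{-1}\delta_i\wedge\xi_i$ using $Z^T=Z$ is exactly the paper's second equality (modulo the normalization you flag), and your direct verification of the special K\"ahler axioms via the prepotential $Z_{ij}=\de_i\de_j F$ is a slightly more explicit version of the paper's one-line appeal to \cite[Theorem 3.4(a)]{Fr}, resting on the same input from Freed.

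The genuine gap is in step (i), and to your credit you identify its location yourself: the identity $(\omega_{SF}+f^*\omega)^{2n}=\Theta^n\wedge\ov{\Theta}^n$ only pins down $\det(g)\det(\Im Z)^{-1}$, so it cannot yield the matrix identity $g=\Im Z$. But your proposed repair --- ``matching components'' against the cross-terms of $\omega_{SF}$, or an unspecified uniqueness statement for hyperk\"ahler metrics compatible with $\Theta$ and the fiber class --- is not an argument; the uniqueness claim ``up to pluriharmonic potentials pulled back from $B$'' is asserted, not proved, and is exactly the content you need. The mechanism the paper actually uses is different and purely pointwise quaternionic algebra: since $(\omega_{SF}+f^*\omega,\Theta)$ is a hyperk\"ahler structure (obtained as the $C^\infty$ limit of the rescaled structures $(\lambda_t^*p^*T_\sigma^*\ti{\omega}_t,\lambda_t^*p^*T_\sigma^*\sqrt{t}\Theta)$), one defines the second complex structure $J$ by $\Re\Theta(\cdot,\cdot)=g(\cdot,J\cdot)$, computes $J$ explicitly in the split coordinates at a point of the zero section --- where $\omega_{SF}$ reduces to its pure fiber part $\mn\sum C_{ij}\,dz_i\wedge d\ov{z}_j$, so no cross-terms enter at all --- and finds that $J$ exchanges $\de_{z_i'}\mapsto-\sum A^{-1}_{ij}\de_{y_j'}$ and $\de_{y_i'}\mapsto\sum C^{-1}_{ij}\de_{z_j'}$ (and similarly for the double-primed directions). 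The constraint $J^2=-\mathrm{id}$ then forces $A=C^{-1}=\Im Z$. Your write-up points at the cross-term structure of $\omega_{SF}$ as the key, which is a red herring: the paper evaluates on the zero section precisely to eliminate those terms, and the identity comes from the compatibility of $g$ with the \emph{full} quaternionic triple, not from the single Monge--Amp\`ere equation or from the shape of $\omega_{SF}$ off the fibers. Without this (or an equivalent) computation, step (i) does not close.
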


\begin{proof}
Denote $\omega=\sqrt{-1}\sum A_{ij}dy_{i}\wedge d\overline{y}_{j}$,  $C=({\rm Im}\,Z)^{-1}$,  and $g$ the hyperk\"ahler metric of $(\omega_{SF}+f^{*}\omega, \Theta)$.  If $x$ belongs the zero section $\Phi^{-1}_{s} (s(B))$,  then, on $ T_{x}(T^{*}B)$, $$\omega_{SF}=\sqrt{-1}\sum C_{ij}dz_{i}\wedge d\overline{z}_{j}, \  \  \  {\rm Re}\,\Theta=\sum (dz_{i}'\wedge dy_{i}'-dz_{i}''\wedge dy_{i}'')$$  $$g=\sum C_{ij}(dz_{i}'dz_{j}'+dz_{i}''dz_{j}'')+\sum A_{ij}(dy_{i}'dy_{j}'+dy_{i}''dy_{j}''),$$ where $z_{i}=z_{i}'+\sqrt{-1}z_{i}''$ and $y_{i}=y_{i}'+\sqrt{-1}y_{i}''$.  We can
define one of the complex structures $J$ on $T^*B$ compatible with the
metric $g$ by ${\rm Re}\,\Theta  (\cdot , \cdot)=g(\cdot, J \cdot)$.
We calculate that $J$ acts as
$$\frac{\partial}{\partial z_{i}'}\mapsto -\sum A^{-1}_{ij}\frac{\partial}{\partial y_{j}'}, \  \   \  \  \   \   \  \frac{\partial}{\partial z_{i}''}\mapsto \sum A^{-1}_{ij}\frac{\partial}{\partial y_{j}''}, $$ $$\frac{\partial}{\partial y_{i}'}\mapsto \sum C^{-1}_{ij}\frac{\partial}{\partial z_{j}'},  \  \   \  \  \   \   \   \frac{\partial}{\partial y_{i}''}\mapsto -\sum C^{-1}_{ij}\frac{\partial}{\partial z_{j}''}.$$ From $J^{2}=-{\rm id}$, we obtain $A=C^{-1}={\rm Im}\,Z,$ and $$\omega=\sqrt{-1}\sum ({\rm Im}\,Z)_{ij}dy_{i}\wedge d\overline{y}_{j}.$$
Note that $e_{i}=d_{i}^{-1}\delta_{i}$ and $\xi_{i}= \sum({\rm Re}\, Z_{ij}e_{j}+ {\rm Im}\, Z_{ij}I_{B} e_{j})$,  where $I_{B}$ is the complex structure on $B$.  Then $$\sum d_{i}^{-1}\delta_{i}\wedge \xi_{i}=\sum ({\rm Im}\,Z)_{ij}{\rm Re}\,dy_{i}\wedge {\rm Im}\, dy_{j}=\omega.$$  Then  \cite[Theorem 3.4(a)]{Fr} shows that $\omega$ is a special K\"ahler metric on $N_0$.
\end{proof}
We now prove the last two statements in Theorem \ref{theorem2}.
Since $\delta_{i}$ and $\xi_{i}$ are closed 1-forms, there are flat Darboux  coordinates $v_{1}, \cdots, v_{2n}$  such that $\delta_{i}=dv_{i}$ and $\xi_{i}=dv_{n+i}$.  Thanks to \cite[Proposition 1.24]{Fr},
 the corresponding Riemannian metric $g_{\omega}$ is the Hessian of a smooth function  $\mathcal{G} $ in the coordinates $v_{i}$, i.e.,  $$g_{\omega}=\sum \frac{\partial^{2}\mathcal{G}}{\partial v_{i}\partial v_{j} }dv_{i}\otimes dv_{j}. $$  Then we have $$
 \sqrt{\det (g_{\omega, ij})}dv_{1} \wedge \cdots \wedge dv_{2n} =   \omega^{n}  =  \left(\prod_i d_{i}^{-1}\right) dv_{1} \wedge \cdots \wedge dv_{2n}.$$
Therefore $g_{\omega}$ is a  Monge-Amp\`ere metric. Finally, thanks to \cite[Remark 3.5]{Fr}, the transition functions of two such coordinates $v_{i}$ and $\tilde{v}_{i}$ satisfy that $$ (v_{1},  \cdots , v_{2n} )= P (\tilde{v}_{1},   \cdots , \tilde{v}_{2n} )+ (b_{1},   \cdots , b_{2n} ),$$ where $P\in Sp(2n, \mathbb{Z})$ and $b_{i}\in \mathbb{R}$.  Thus these coordinates $v_{i}$ define an integral affine structure on $N_{0}$.
This completes the proof of Theorem \ref{theorem2}.

\end{document}